\documentclass[10pt,a4paper]{amsart}
\pagestyle{plain}

\usepackage[latin1]{inputenc}
\usepackage[english]{babel}
\usepackage{geometry}
\usepackage{enumerate}
\usepackage{amsmath, amsfonts, amscd, amssymb, amsthm , array}
\usepackage{latexsym}
 \usepackage{graphicx}

\usepackage[T1]{fontenc}
\renewcommand{\P}{\mathbb{P}}
\newcommand{\M}{\mathcal{M}}
\newcommand{\A}{\mathcal{A}}
\newcommand{\B}{\mathcal{B}}
\renewcommand{\t}{\tau}
\newcommand{\D}{\Delta}

\renewcommand{\l}{\lambda}
\renewcommand{\L}{\Lambda}
\newcommand{\G}{\Gamma}

\renewcommand{\to}{\mapsto}

\DeclareMathOperator{\rk}{rk}
\DeclareMathOperator{\Sym}{Sym}

\DeclareMathOperator{\Proj}{Proj}

\renewcommand{\o}{\omega}

\newcommand{\h}{Hyp}
\newcommand{\RR}{\mathcal{R}}
\renewcommand{\H}{\mathcal{H}}

\newcommand{\C}{\mathbb{C}}

\newcommand{\m}{\mathfrak{m}}
\renewcommand{\to}{\rightarrow}

\theoremstyle{plain}
\newtheorem{theorem}{Theorem}[section]
\newtheorem{corollary}[theorem]{Corollary}
\newtheorem{definition}[theorem]{Definition}
\newtheorem{proposition}[theorem]{Proposition}
\newtheorem{criterion}[theorem]{Criterion}

\theoremstyle{definition}
\newtheorem{remark}[theorem]{Remark}
\newtheorem{lemma}[theorem]{Lemma}
\newtheorem{example}[theorem]{Example}

\title{Hyperelliptic Schottky problem\\
 and stable modular forms}
\date{}
\author[G.Codogni]{Giulio Codogni}
\address{Dipartimento di Matematica e Fisica, Universit\`{a} degli Studi Roma Tre, Largo S. Leonardo Murialdo 1, 00146 Rome (Italy)}
\email{codogni@mat.uniroma3.it}
\begin{document}
\begin{abstract}
It is well known that, fixed an even, unimodular, positive definite quadratic form, one can construct a modular form in each genus; this form is called the theta series associated to the quadratic form. Varying the quadratic form, one obtains the ring of stable modular forms. We show that the differences of theta series associated to specific pairs of quadratic forms vanish on the locus of hyperelliptic Jacobians in each genus. In our examples, the quadratic forms have rank 24, 32 and 48. The proof relies on a geometric result about the boundary of the Satake compactification of the hyperelliptic locus. We also study the monoid formed by the moduli space of all principally polarised abelian varieties, the operation being the product of abelian varieties. We use this construction to show that the ideal of stable modular forms vanishing on the hyperelliptic locus in each genus is generated by differences of theta series.

\end{abstract}
\maketitle
\begin{section}{Introduction}

The hyperelliptic Schottky problem is to characterise the locus of Jacobians of hyperelliptic curves inside the moduli space of principally polarised abelian varieties. A classical approach is to look for modular forms vanishing along the hyperelliptic locus; in other words, one looks for the equations of the hyperelliptic locus inside the moduli space of principally polarised abelian varieties. 

A special kind of modular forms are the theta constants; these were used by Mumford to give a solution to the hyperelliptic Schottky problem, as reviewed in Theorem \ref{SM}.

In this paper, we deal with stable modular forms. One nice feature of these forms is that they relate the theory of moduli spaces with the theory of quadratic forms.

To start with, let us explain what we mean by stable modular forms. Let $\A_g$ be the moduli space of principally polarised abelian $g$-fold defined over the field of complex numbers.  We consider the Satake compactification $\A_g^S$ of $\A_g$. This comes with a stratification
$$ \A_g^S=\A_g \sqcup \A_{g-1} \cdots \A_1 \sqcup \A_0  $$
In particular, we have a closed embedding
$$ \iota_g \colon \A_{g-1}^S \hookrightarrow \A_g^S $$
The collection of the moduli spaces $\A_g^S$ and these maps form a direct system of varieties; we can thus consider the ind-scheme
$$ \A_{\infty}:=\lim_g A_g^S $$
The basic definitions about ind-schemes are recalled in Section \ref{app}

Stable modular forms are naturally defined on $\A_{\infty}$. A stable modular form $F$ is a collection of modular forms $(F_g)_{g\geq 0}$: each $F_g$ is modular form on $\A_g^S$ and $\iota_g^*F_g=F_{g-1}$. We will recall the theory in Section \ref{satake}, in particular see Definition \ref{StableModForm}. A classical and surprising fact is that we can construct a stable modular form out of an even, unimodular, positive definite quadratic form $Q$. This stable modular form is called theta series associated to $Q$, and is denoted by $\Theta_Q$; see Definition \ref{thetaseries}. In particular, for any $g$, $\Theta_{Q,g}$ is a modular form on $\A_g^S$. In \cite{stabile}, Freitag showed that all stable modular forms are linear combinations of theta series.

In this set up, we can consider the ideal of stable modular forms vanishing on the locus $\h_g$ of hyperelliptic Jacobians in every genus. Let us formalise this with definition.
\begin{definition}[Stable Equation]
A stable equation for the hyperelliptic locus is a stable modular form $(F_g)_{g\geq0}$ such that $F_g$ vanishes along the hyperelliptic locus $\h_g$ for every $g$.
\end{definition}
Our first result is the following:

\begin{theorem}[= Theorem \ref{differences}]\label{Main_Intro}
The ideal of stable equations of the hyperelliptic locus is generated by differences of theta series
$$\Theta_P-\Theta_Q$$
where $P$ and $Q$ are even, unimodular, positive definite quadratic forms of the same rank.
\end{theorem}

A key ingredient in the proof of this results is a natural monoidal structure that one can put on $\A_{\infty}$. Given two principally polarised abelian varieties, their product is still a principally polarised abelian variety but of higher dimension. This defines an operation
$$
m\colon \A_{\infty}\times \A_{\infty}\to \A_{\infty}
$$
The pull-back $m^*$ gives to the ring of stable modular forms the structure of a co-commutative co-algebra. Because of this, we can run the general machinery explained in Section \ref{app} to prove Theorem \ref{Main_Intro}.

\smallskip

So far, the ideal of stable equations for the hyperelliptic locus could be trivial. Indeed, this is the case for the moduli space of curves: in \cite{CSB}, it is shown that the ideal of stable modular form vanishing on the Jacobian locus in any genus is trivial. In other words, given a non-zero stable modular form $F$, there exists a $g$ such that $F_g$ does not vanish on the moduli space of genus $g$ curves. In \cite{SB}, it is similarly shown that the ideal of stable equation for the $n$-gonal locus, with $n\geq 3$, is trivial. However, as we are going to see, the ideal of stable equations for the locus of Jacobians of hyperelliptic curves is far from being trivial.

\smallskip

The first stable equation for the hyperelliptic locus was discovered by C. Poor (\cite{Poor}): it is the difference of the theta series associated to the quadratic forms $D_{16}^+$ and $E_8\oplus E_8$; this modular form is also called the called Schottky form. To construct new stable equations, we need to know more about the geometry of the Satake compactification. The Satake compactification $\h_g^S$ of $\h_g$ will be defined in Section \ref{Hyperelliptic}. We denote by $\A_g^{ind}$ the moduli space of indecomposable principally polarised abelian $g$-fold.

\begin{theorem}[= Theorem \ref{Transversality}; Transversality]\label{Transversality_Intro}
The intersection of the Stake compactification $\h_{g+1}^S$ and $\A_g^{ind}$ inside $\A_{g+1}^S$ is scheme theoretically equal to $\h_g$.
\end{theorem}
The statement was well-known at the level of sets; we call it a transversality result because it states that the scheme structure of the intersection is the reduced one. The analogue result does not hold for the moduli of curves \cite[Theorem 1.1]{CSB} and for the moduli of $n$-gonal curves \cite{SB}, with $n\geq 3$. In those cases, the failure of the transversality implies that there are no stable equations; in the hyperelliptic case, this transversality result is key in the construction of stable equations.

Combining Theorem \ref{Transversality_Intro} and Criterion \ref{crit_slope} we can prove the following
\begin{theorem}[= Corollary \ref{first_equations} and Theorem \ref{second_equations}]\label{equations_intro}
The difference of theta series
$$\Theta_P-\Theta_Q$$
is a stable equation for the hyperelliptic locus when one of the following hold:
\begin{enumerate}
\item $\rk(P)=\rk(Q)=24$ and the two quadratic forms have same number of vectors of norm $2$;
\item $\rk(P)=\rk(Q)=32$ and the two  quadratic forms do not have any vector of norm $2$;
\item $\rk(P)=\rk(Q)=48$ and the two quadratic forms do not have any vector of norm $2$ or $4$;
\end{enumerate}

\end{theorem}

Each item of Corollary \ref{equations_intro} concerns a finite positive number of pairs of quadratic forms. In \cite{King}, it is shown that there are more than ten millions of quadratic forms meeting the hypothesis of the second item. In the first item, the "slope" of the quadratic form, i.e. the ratio between the rank and the norm of the shortest  non-zero vector, is strictly bigger than the slope of the hyperelliptic locus; because of this, in the proof we need to use some non-trivial arithmetic properties of the quadratic forms: namely we use Theorem \ref{polLat} via Corollary \ref{he}.

\smallskip

We think that the ideal of stable equations defines scheme theoretically the hyperelliptic locus inside the moduli space of indecomposable principally polarised abelian varieties in any genus. This, in particular, would imply that there are infinitely many pairs of quadratic forms which give stable equations for the hyperelliptic locus. In order to give a characterisation of these pairs, we think one should relate theta series to partition functions, as partially suggested in \cite{Volp}, \cite{Volp2} and \cite{Matone}. 

\begin{subsection}*{Acknowledgement} This work is part of my Ph.D. thesis \cite{PhD}, I would like to thank my advisor Nick Shepherd-Barron, without whom this work would not have been possible. It is a pleasure to thank Riccardo Salvati Manni for many useful advices. I also had the benefit of conversations with Enrico Arbarello, Gavril Farkas, Sam Grushevsky, Marco Matone, Sara Perna, Filippo Viviani and Roberto Volpato on the topics of this paper. I would like to thank the refree for useful comments.

My Ph.D. was funded by the DPMMS, the EPSRC and Selwyn College. I have been also supported by the Physics department of the University of Padova during two visits. Now, I am funded by the grant FIRB 2012 "Moduli Spaces and their applications" and by the ERC StG 307119 - StabAGDG. 
\end{subsection}
\end{section}

\begin{section}{Ind-varieties and commutative Monoids}\label{app}
In this section, we recall some general definitions and results about ind-varieties and monoid. A reference about ind-variety is \cite[Chapter IV]{Kumar}. An ind-variety $X$ is a collection $(X_n)_{n\geq 0}$ of algebraic varieties and a collection of closed embeddings
$$
\iota_n\colon X_{n-1}\hookrightarrow X_n
$$
We write
$$
X=\lim_n X_n
$$
This limit exists in the category of locally ringed spaces; however, we prefer to enlarge the category of schemes including all direct systems. This means that for us an ind-variety is a direct system of algebraic varieties.

A line bundle $L$ on $X$ is the data of a line bundle $L_n$ on each $X_n$ such that $\iota_n^*L_n=L_{n-1}$.  A section $s$ of $L$ is a collection of sections $(s_n)_{n\geq 0}$ such that $s_n$ is a section of $L_n$ on $X_n$ and the restriction  of $s_n$ to $X_{n-1}$ is $s_{n-1}$. We assume that the vector space $H^0(X,L^k)$ is finite dimensional for every $k$. The ring of sections of $L$ is thus defined as a projective limit in the category of graded rings
\[\RR(X,L):=\varprojlim \RR(X_n,L_n)\,.\]
We do not have to worry about the topology of this ring because of graded pieces $H^0(X,L^k)$ are finite dimensional. In other words, elements on $\RR(X,L)$ are not formal power series.

\begin{remark}[Ampleness on ind-varieties]\label{ample}
The concept of ampleness for a line bundle $L$ on an ind-variety $X$ is subtle and, to the best of our knowledge, there is not a standard definition. A first definition could be that there exists a $k$ such that $L^k_n$ is very ample on $X_n$ for every $n$. Remark that $k$ does not depend on $n$. If $H^0(X,L^k)$ is finite dimensional for every $k$ but the dimension of $X_n$ tends to infinity when $n$ grows, $L$ can not be ample. A weaker definition is to ask that for every $n$ there exists a $k=k(n)$ such that $L_n^k$ is very ample on $X_n$. The example that we will study in this paper is ample just in the sense of the second definition. This second definition does not imply the classical consequences of ampleness: for instance, in this generality, it is not even clear that an ample line bundle is effective.
\end{remark}

An ind-monoid is an ind-variety $M$ with an associative multiplication and an identity element $1_M$. A multiplication $m$ is a family of maps
\[m_{g,h}\colon M_g\times M_h \rightarrow M_{g+h}\]
compatible with the restrictions. $M$ is commutative if the multiplication is. 
\begin{definition} [Split monoid]\label{split}
Let $M$ be a commutative ind-monoid and $L$ a line bundle on $M$. We say that $M$ is split with respect to $L$ if the following two conditions hold:
\begin{enumerate}
\item For every $g$ and $h$
\[m_{g,h}^*L_{g+h}\cong pr_1^*L_g \otimes pr_2^*L_h=:L_g\boxtimes L_h\]
where $pr_i$ are the projections; 
\item for every $k$, the vector space $H^0(M,L^k)$ is finite dimensional and spanned by characters, where a section $\chi$ of $L$ is a character if
\[m_{g,h}^*\chi_{g+h}=\chi_g\boxtimes \chi_h  \qquad \forall \, g,h\,.\]
\end{enumerate}
\end{definition}
In the language of Hopf algebras, condition (1) means that the pull-back $m^*$ is a co-commutative co-multiplication for $\RR(M,L)$. The definition of character makes sense only if condition (1) holds. With a slight abuse of notations, we will speak about characters of $M$ rather than characters of the co-algebra $\RR(M,L)$, and we will write $\chi(\alpha\beta)=\chi(\alpha)\chi(\beta)$ instead of $m_{g,h}^*\chi_{g+h}(\alpha \times \beta)=\chi_g(\alpha)\boxtimes \chi_h(\beta) $.
\begin{lemma}\label{li}
Let $M$ be a commutative monoid, suppose it is split with respect to a line bundle $L$, then, any set of characters is linearly independent.
\end{lemma}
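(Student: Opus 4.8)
The statement is the classical Dedekind--Artin independence-of-characters theorem transported to the split ind-monoid setting, so the plan is to run the standard minimal-counterexample argument, with extra care because here a character is a section of a line bundle and its ``values'' lie in one-dimensional fibres rather than in the ground field. The two structural inputs are exactly condition (1) of the split hypothesis (which lets me tensor values together) and the commutativity of $M$.

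First I would record that a character is necessarily nonzero. Evaluating the character equation $m_{0,g}^*\chi_g=\chi_0\,\chi_g$ on $M_0\times M_g=\{1_M\}\times M_g$ at a point $\alpha$ gives $\chi(\alpha)=\chi(1_M)\otimes\chi(\alpha)$ under the canonical identification of condition (1), which forces $\chi(1_M)$ to correspond to the unit of the fibre $L_{1_M}$; in particular $\chi(1_M)\neq 0$, so $\chi\neq 0$. This settles the length-one base case and guarantees that any genuine linear relation among characters has length at least two.

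Next I would suppose, for contradiction, that there is a nontrivial relation $\sum_{i=1}^n a_i\chi^{(i)}=0$ among distinct characters, chosen of minimal length $n\geq 2$ with all $a_i\neq 0$. Since $\chi^{(1)}\neq\chi^{(2)}$ there is a degree $h$ and a point $\beta\in M_h$ with $\chi^{(1)}_h(\beta)\neq\chi^{(2)}_h(\beta)$ in the fibre $L_{h,\beta}$; fixing a nonzero $e\in L_{h,\beta}$ I write $\chi^{(i)}_h(\beta)=c_i e$, so that $c_1\neq c_2$. The main step is then to feed the product $\alpha\beta$ into the relation: for every $\alpha\in M_g$ the character equation gives, via the isomorphism $m_{g,h}^*L_{g+h}\cong pr_1^*L_g\otimes pr_2^*L_h$, the identity $\chi^{(i)}(\alpha\beta)=\chi^{(i)}(\alpha)\otimes\chi^{(i)}(\beta)=c_i\bigl(\chi^{(i)}(\alpha)\otimes e\bigr)$. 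Hence $0=\sum_i a_i\chi^{(i)}(\alpha\beta)=\bigl(\sum_i a_i c_i\,\chi^{(i)}(\alpha)\bigr)\otimes e$, and since $e\neq 0$ this produces the twisted relation $\sum_i a_i c_i\,\chi^{(i)}=0$. Subtracting $c_1$ times the original relation annihilates the $i=1$ term and leaves $\sum_{i\geq 2} a_i(c_i-c_1)\chi^{(i)}=0$, whose $\chi^{(2)}$ coefficient $a_2(c_2-c_1)$ is nonzero; this is a shorter nontrivial relation, contradicting minimality.

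I expect the only real obstacle to be conceptual rather than deep: because sections of $L$ take values in fibres and not in a field, the familiar ``evaluate and divide'' moves of the Artin argument must be rephrased through the tensor identification of condition (1) together with the single trivialisation $e$ of $L_{h,\beta}$. The point I would check most carefully is that the twisted relation $\sum_i a_i c_i\chi^{(i)}=0$ holds \emph{simultaneously in every degree} $g$ --- this is what makes it a relation among full characters rather than among their degree-$g$ truncations, and it works precisely because $\beta$ and $e$ are fixed once and for all while $\alpha$ ranges over all of $M$, which is where splitness and commutativity enter.
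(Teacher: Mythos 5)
Your proof is correct and is essentially the paper's own argument: the classical Dedekind--Artin minimal-relation trick, multiplying the relation by a fixed $\beta$ and subtracting a multiple to produce a shorter relation (the paper merely normalizes differently, solving for $\chi_n$ and choosing $\alpha$ with $\chi_1(\alpha)\neq\chi_n(\alpha)$, which is a cosmetic difference), with your trivialization of the fibre $L_{h,\beta}$ by $e$ making explicit what the paper leaves implicit. One small caveat: your preliminary claim that every character is nonzero is circular for the identically zero section (which formally satisfies the character equation, since $0=\chi(1_M)\otimes 0$ imposes nothing on $\chi(1_M)$), but excluding the zero section is the standard convention that the paper also tacitly adopts, and it does not affect your main induction.
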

\begin{proof}
This proof is standard. We argue by contradiction. Take $n$ minimal such that there exist $n$ linearly dependent characters $\chi_1,\dots , \chi_n$. We can write
\[\chi_n=\sum_{i=1}^{n-1}\l_i\chi_i \qquad \l_i\in \C\,.\]
Pick $\alpha\in M$ such that $\chi_1(\alpha)\neq\chi_n(\alpha)$. For any $\beta\in M$ we have
\[\sum_{i=1}^{n-1}\l_i\chi_i(\alpha)\chi_i(\beta)=\chi_n(\alpha)\chi_n(\beta)=\chi_n(\alpha)\left(\sum_{i=1}^{n-1}\l_i\chi_i(\beta)\right)\]
Since $\beta$ is arbitrary we get
\[\sum_{i=i}^{n-1}\l_i(\chi_i(\alpha)-\chi_n(\alpha))\chi_i=0\,.\]
The coefficient $\chi_1(\alpha)-\chi_n(\alpha)$ is non-zero, so we have written a non-trivial linear relation among fewer than $n$ characters. This contradicts the minimality of $n$.
\end{proof}
\begin{proposition}\label{rep}
Let $M$ be a commutative ind-monoid and $N$ a submonoid. Suppose that $M$ is split with respect to a line bundle $L$. Then the ideal $I_N$ in $\RR(M,L)$ of sections vanishing on $N$ is generated by differences of characters
\[\chi_i-\chi_j\]
\end{proposition}
\begin{proof}
Take $s$ in $I_N$. We can assume that $s$ is homogeneous and write it as a linear combination 
\[s=\l_1\chi_1+\cdots +\l_n\chi_n\]
where $\chi_i$ are characters and $\l_i$ are constants. Restricting $\chi_i$ to $N$ some of them might become equal. Up to relabelling the $\chi_i$, we can fix integers $0=m_0 < m_1 < \cdots <m_k=n$ and distinct characters $\theta_1, \dots , \theta_k$ of $N$ such that
\[\chi_i\mid_N=\theta_j \quad \iff \quad m_{j-1}<i\leq m_j\]
For $j=1,\dots , k$, let us define 
\[\mu_j:=\sum_{i=m_{j-1}+1}^{m_j}\l_i \,.\]
By hypothesis we know that
\[0=s\mid_N \, =\sum_{j=1}^k \mu_j\theta_j\,.\]
By Lemma \ref{li} we have $\mu_j=0$ for every $j$, so
\[s=s-\sum_{j=1}^k\mu_j\chi_{m_j}=\sum_{j=1}^k \sum_{i=m_{j-1}+1}^{m_j}\l_i(\chi_i-\chi_{m_j})\]
The differences $\chi_i-\chi_{m_j}$ vanish on $N$ for $m_{j-1}<i\leq m_j$, so we have just expressed $s$ as linear combination of differences of characters vanishing on $N$.
\end{proof}
The previous argument actually shows that every element of the ideal can be written as a linear combination of differences of characters. These results are special cases of a more general theory of Milnor and Moore. They have many applications in the study of moduli spaces, e.g. \cite{Hul}.
\end{section}

\begin{section}{Satake compactification, modular forms and theta series}\label{satake}
We recall some facts about modular forms and the Satake compactification of $\A_g$. General references about modular forms are \cite{1-2-3} and \cite{Mum}. The Satake compactification was first defined in \cite{Satake}; a comprehensive reference is \cite{Fre}.

The line bundle $L_g$ of weight one modular forms on $\A_g$ is defined as the determinant of the Hodge bundle; it is ample and it generates the rational Picard group.
\begin{definition}[Siegel modular form]
A weight $k$ and degree $g$ Siegel modular form is a section of $L_g^k$ on $\A_g$.
\end{definition}
The universal cover of $\A_g$ is the Siegel upper half space $\H_g$; the symplectic group $Sp(2g,\mathbb{Z})$ acts on $\H_g$ and
\[\A_g=\H_g/Sp(2g,\mathbb{Z})\]
The line bundle $L_g$ is trivial when it s pulled back to $\H_g$; therefore a modular form can be also defined as a holomorphic function on $\H_g$ which transforms appropriately under the action of $Sp(2g,\mathbb{Z})$.

The Satake compactification $\A_g^S$ is a normal projective variety defined as follows
\[\A_g^S:=\Proj(\bigoplus_{n\geq 0} H^0(\A_g,L^n_g))\]
This is the compactification "seen" by modular forms. The line bundle $L_g$ extends naturally to $\A_g^S$ because it is the $\mathcal{O}(1)$ of this $\Proj$. For the same reason, all modular forms extend to $\A_g^S$

\begin{definition}[The Siegel operator]\label{Siegel}
The Siegel operator $\Phi$ is a map of graded rings
$$
\Phi \colon \bigoplus_{n \geq 0} H^0(\A_g, L^n_g)\rightarrow \bigoplus_{n\geq 0} H^0(\A_{g-1},L^n_{g-1})
$$
defined as 
\[\Phi(F)(\t):=\lim_{t\rightarrow+\infty}F(\t\oplus it)\,,\]
where $\t$ is an element of $\H_{g-1}$ and $t\in \mathbb{R}$. Here, we are thinking at $F$ as a holomorphic function on $\H_g$. 
\end{definition}
Clearly, there is some work to do to show that $\Phi(F)$ is a well defined element of $H^0(\A_{g-1},L_g^n)$; the interested reader cal look at \cite{Fre}.

The Siegel operator is surjective for $n$ even and larger than $2g$ (\cite{Fre} page 64); this means that the Siegel operator defines a closed embedding of $\iota_g\colon \A_{g-1}^S \hookrightarrow \A_g^S$. One can check that the image of $\A_{g-1}^S$ is the boundary $\partial \A_g^S$ of $\A_g^S$, so we obtain a stratification
\[\A_g^S=\A_g\sqcup \A_{g-1}^S= \A_g \sqcup \A_{g-1} \cdots \A_1 \sqcup \A_0\,.\]
By construction, the pull-back $\iota_g^*L_g$ is isomorphic to $L_{g-1}$, and the pull-back $\iota_g ^*\colon H^0(\A_g, L^n_g)\rightarrow H^0(\A_{g-1},L^n_{g-1})$ is the Siegel operator. Again, a reference is \cite{Fre}.

The system of varieties $\A_g^S$ together with the closed embeddings $\iota_g$ induced by the Siegel operators forms a direct system, so we can define the ind-variety
\[\A_{\infty}:=\lim_g \A_g^S\]
We follow the notations of Section \ref{app}. The line bundles $L_g$ define a line bundle 
$$L_{\infty}:=\lim_g L_g$$ 
on $\A_{\infty}$. This line bundle is called the line bundle of weight one stable stable modular forms. 
\begin{definition}[Stable modular forms]\label{StableModForm}
A weight $k$ stable modular form $F$ is a section of $L^k_{\infty}$. More concretely, it is a collection
\[F=(F_g)_{g\geq 0}\]
where $F_g$ is a modular form of weight $k$ on $\A_g$ and
\[\Phi(F_{g+1})=F_g\]
\end{definition}
Recall that each line bundle $L_g$ is ample on $\A_g$; however, the same assertion is problematic for $L_{\infty}$, as explained in Remark \ref{ample}.

We now define a structure of commutative monoid on $\A_{\infty}$. Given two principally polarised abelian varieties of dimension respectively $g$ and $h$, their product is still a principally polarised abelian variety of dimension $g+h$. This gives a commutative operation
\begin{displaymath}
 \begin{array}{cccc}
  m \colon&\A_{\infty}\times \A_{\infty}&\rightarrow &\A_{\infty}\\
   &([X],[Y])&\mapsto & [X\times Y]
 \end{array}
\end{displaymath}
The identity element is $\A_0$.
\begin{lemma}\label{pull_back_hodge}
 On $\A_{\infty}\times \A_{\infty}$, we have
\[m^*L_{\infty}=L_{\infty}\boxtimes L_{\infty}\,.\]
\end{lemma}
\begin{proof}
For every pair of integers $g$ and $h$ one looks at the morphism
$$m\colon \A_g\times \A_h\to \A_{g+h}  $$
The fibre of the Hodge bundle $E_g$ at a point $[X]$ of $\A_g$ is the tangent space at the identity of $X$. This implies that $m^*E_{g+h}$ is $E_g\boxtimes E_h$. The statement on $L_g$ follows by taking the determinant. 
\end{proof}
 We have the following useful formal consequence
\begin{proposition}\label{hopf}
The pull-back $m^*$ defines a co-commutative co-multiplication on the algebra of stable modular forms $\RR(\A_{\infty},L_{\infty})$.
\end{proposition}

The ring of stable modular form, so far, could be trivial. However, there is a classical and surprising way to produce plenty of stable modular forms out of quadratic forms. Let us go trough all definitions. A quadratic form is a pair $(\L,Q)$, where $\L$ is a finitely generated free group and $Q$ is a $\mathbb{Z}$-valued bilinear form on $\L$. The rank of the quadratic form is defined as the rank of $\L$. The elements of $\L$ are called vectors, and the norm of a vector $v$ is $Q(v,v)$. We always assume $Q$ to be even (i.e. $Q(v,v)$ is even for every $v$), unimodular (i.e. $\det Q=1$) and positive definite. Often, we will denote a quadratic form just by $Q$.
\begin{definition}[Theta series]\label{thetaseries}
Let $(\L,Q)$ be an even unimodular positive definite quadratic form and $g$ a positive integer, the associated theta series is
\[\Theta_{Q,g}(\t):=\sum_{x_1,\dots ,x_g\in \L}\exp(\pi i \sum_{i,j}Q(x_i,x_j)\t_{ij})\]
where $\t$ belongs to $\H_g$.
\end{definition}
This is a weight $\frac{1}{2}\rk(\L)$ and degree $g$ modular form. By explicit computation, one sees that the Siegel operator \ref{Siegel} acts as follows
\[\Phi(\Theta_{Q,g+1})=\Theta_{Q,g}\,,\]
so the collection of all theta series
\[\Theta_{Q}:=  (\Theta_{Q,g})_{g\geq 0}\]
is a stable modular form. Given $X\in \A_g$ and $Y\in \A_h$, we have the factorisation property
\[\Theta_{Q,g+h}([X\times Y])=\Theta_{Q,g}(X)\Theta_{Q,h}(Y)\,,\]
which means that the \emph{theta series are characters} for the monoid $\A_{\infty}$. 
\begin{example}[Quadratic forms and theta constants]\label{Wittquadratic forms}
In some cases, theta series can be written in term of theta constants, let us give some examples following \cite{IgusaSchot}. Let $E_8$ be the quadratic form associate to the Dynkin diagram $E_8$. Using a similar definition, for every integer $k$ one can define the Witt quadratic forms $W_{8k}$. The quadratic form $W_{8k}$ has rank $8k$, it is equal to $E_8$ for $k=1$ and to $D_{16}^+$ for $k=2$. Up to a constant, we have the following expansion
\[\Theta_{W_{8k},g}(\t)=\sum_{\epsilon \,\textrm{even}}\theta[\epsilon]^{8k}(\t)\,,\]
where the sum runs over all the even theta characteristics. In particular, the well-known Schottky form can be written as
\[\Theta_{D_{16}^+}-\Theta_{E_8\oplus E_8}=2^{-g}\sum_{\epsilon \,\textrm{even}}\theta[\epsilon]^{16}(\t)-2^{-2g}(\sum_{\epsilon \,\textrm{even}}\theta[\epsilon]^{8}(\t))^2\]
In general, a theta series will not have such a simple expression in term of theta constants. In \cite[Section 3]{StableSalvatiManni}, there is a systematic analysis of the theta series which can be expanded in this way; the results of that paper relies upon \cite[Theorem 6.3]{Mum}.
\end{example}

The ring of stable modular forms is described by the following result of Freitag:
\begin{theorem}[Theorem 2.5 of \cite{stabile}]
The ring of stable modular forms $\RR(\A_{\infty},L_{\infty})$ is the polynomial ring in the theta series associated to irreducible quadratic forms.
\end{theorem}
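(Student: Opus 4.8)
The plan is to identify $\RR(\A_{\infty},L)$ with the monoid algebra of its characters, and then to prove that the monoid of characters is free commutative on the Theta series of irreducible lattices. First I would record that the distinct Theta series form a $\C$-basis of $\RR(\A_{\infty},L)$: they span it, as recalled above, and being distinct characters they are linearly independent by Lemma \ref{li}. In fact every character is a Theta series, since writing a character $\chi=\sum_i\l_i\Theta_{\L_i}$ in this basis and using linear independence of the distinct characters involved forces $\chi=\Theta_{\L_i}$ for a single $i$. Let $S$ be this set of Theta series; it is closed under multiplication, because the product of two characters is again a character, so $S$ is a commutative monoid. Since the multiplication of $\RR(\A_{\infty},L)$ is determined on the basis $S$ by the monoid law, we obtain a ring isomorphism $\RR(\A_{\infty},L)\cong\C[S]$, and it suffices to prove that $S$ is free commutative on the Theta series of irreducible lattices.

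Next I would transport the question to lattices. From the explicit definition in section \ref{SE}, the Theta series is multiplicative under orthogonal direct sum, $\Theta_{\L\oplus\G}=\Theta_{\L}\Theta_{\G}$, because the Gram matrix of $\L\oplus\G$ is block diagonal and the defining sum factors accordingly. On the lattice side, the even unimodular positive definite lattices form, under $\oplus$, a free commutative monoid generated by the irreducible (indecomposable) ones: this is the classical uniqueness of orthogonal decomposition into indecomposables, together with the observation that an orthogonal summand $\L_1$ of an even unimodular $\L$ is again even unimodular, since $\det\L=\det\L_1\cdot\det\L_2=1$ with positive integer factors and sublattices of even lattices are even. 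Multiplicativity then yields a surjective homomorphism of commutative monoids from this free monoid onto $S$, sending each irreducible lattice to its Theta series.

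The crux is to show that this homomorphism is injective, i.e.\ that non-isometric lattices have distinct stable Theta series. Two lattices of different rank are already distinguished by the weight $\frac{1}{2}rk$ of their Theta series, so I may assume $\L$ and $\G$ both have rank $n$. Here I would pass to the genus-$n$ component $\Theta_{\L,n}$ and inspect the Fourier coefficient attached to the Gram matrix $G_{\L}$ of $\L$: it equals the number of $X\in\G^n$ with $X^tX=G_{\L}$, that is, the number of full-rank isometric embeddings $\L\into\G$. Such an embedding realizes $\L$ as a finite-index sublattice of $\G$ with $[\G:\L]^2=\det\L/\det\G=1$, hence is an isometry; consequently this coefficient is positive when $\G\cong\L$ and vanishes otherwise. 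Thus $\Theta_{\L,n}\neq\Theta_{\G,n}$ whenever $\L\not\cong\G$, the monoid homomorphism is an isomorphism, and $S$ is free commutative on the Theta series of irreducible lattices. Combining with the first paragraph, $\RR(\A_{\infty},L)\cong\C[S]$ is the asserted polynomial ring.

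The main obstacle is this injectivity step: one must ensure that the stable Theta series sees the full isometry class, and the cleanest route is the genus-$n$ coefficient computation above, whose validity rests on the elementary but essential fact that a full-rank isometric embedding between unimodular lattices of equal rank is an isomorphism. The remaining work is formal once the monoid-algebra reformulation is set up; the only further care needed is to check that ``irreducible lattice'' coincides with ``indecomposable orthogonal summand'' within the even unimodular class, which is precisely the determinant argument noted above.
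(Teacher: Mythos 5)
Your proposal is correct and takes essentially the same route as the paper: both rest on the multiplicativity $\Theta_{\L\oplus\G}=\Theta_{\L}\Theta_{\G}$, on linear independence of characters (the paper via Lemma~\ref{rep} applied to the surjection from the formal polynomial ring $P$, you directly via Lemma~\ref{li} and a monoid-algebra identification), and on the identical key computation that non-isometric lattices of rank $N$ are separated by the genus-$N$ Fourier coefficient at the Gram matrix, since a full-rank isometric embedding between unimodular lattices of equal rank is an isometry. If anything, your write-up is slightly more complete, as you make explicit the uniqueness of the orthogonal decomposition into indecomposables (and that summands of even unimodular lattices are even unimodular), which the paper's identification of $P$ with the polynomial ring on irreducible lattices leaves implicit.
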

Freitag's main contribution was to show that  $H^0(\A_{\infty},L_{\infty}^k)$ is spanned by theta series for every $k$. There are finitely many quadratic forms of a given rank, so we already learn that $H^0(\A_{\infty},L_{\infty}^k)$ is finite dimensional. This result, together with Lemma \ref{pull_back_hodge} and the fact that theta series are characters, means that $\A_{\infty}$ equipped with the line bundle $L_{\infty}$ is a split monoid in the sense of Definition \ref{split}. Freitag's claim about the polynomial structure now follows easily from Proposition \ref{rep}.
\end{section}

\begin{section}{Satake compactification of the hyperelliptic locus}\label{Hyperelliptic}
In this section we define the Satake compactification of the hyperelliptic locus and we prove Theorems \ref{Main_Intro} and \ref{Transversality_Intro}. Consider the Jacobian morphism
$$ j\colon \h_g \to \A_g   $$
mapping a curve to its Jacobian.
\begin{definition}[Satake compactification]
The Satake compactification $\h_g^S$ of the hyperelliptic locus $\h_g$ is the scheme-theoretic closure of $j(\h_g)$ inside $\A_g^S$.
\end{definition}
A degeneration of a hyperelliptic Jacobian is still the Jacobian of a hyperelliptic curve (\cite{Hoyt}), so we have a stratification
$$ \h_g^S=\bigsqcup_{\sum g_i\leq g}\h_{g_1}\times\cdots \times \h_{g_k}$$
Equivalently, the Satake compactification is the image of the Deligne-Mumford compactification of $\h_g$ under the morphism which maps a curve to the Jacobian of its normalisation.

\smallskip

In particular, $\h_{g+1}^S$ contains $\h_g^S$ as a scheme, so we can define the commutative ind-monoid 
 \[\h_{\infty}:=\lim_g \h_g^S\]
Using the monoid structure we can show the following
\begin{theorem}\label{differences}
The ideal of stable modular forms vanishing on $\h_{\infty}$ is generated by differences of theta series
$$\Theta_P-\Theta_Q$$
where $P$ and $Q$ are even, unimodular, positive definite quadratic forms of the same rank.
\end{theorem}
\begin{proof} 
We know that $\h_{\infty}$ is a commutative sub-monoid of $\A_{\infty}$ and $\A_{\infty}$ satisfies the hypotheses of Definition \ref{split}. The theta series are the characters of co-algebra $\RR(\A_{\infty},L_{\infty})$, so the result is a direct consequence of Proposition \ref{rep}.
\end{proof}
So far, the ideal studied in the theorem could be trivial. A key tool to show that a modular form is a stable equation for the hyperelliptic locus is the following geometric result. Let $\A_g^{ind}$ be the moduli space of indecomposable principally polarised abelian $g$-fold.

\begin{theorem}[Transversality]\label{Transversality}
Inside $\A_{g+1}^S$ , the intersection of $\h_{g+1}^S$ and $\A_g^{ind}$ is scheme theoretically equal to $\h_g$.
\end{theorem}
The statement was well-known at the level of sets (cf \cite{Hoyt} or \cite[Lemma 11.6.14]{ACG}); we call it a transversality result because it states that the scheme structure of the intersection is the reduced one. 
\begin{proof}
We work with level structure $(4,8)$, and we denote by $\A_g^S(4,8)$ the Satake compactification of the moduli space $\A_g(4,8)$ of principally polarised abelian $g$-fold with level structure $(4,8)$. This amounts to take a finite Galois cover of $\A_g^S$. Now, $\A_{g+1}^S(4,8)$ has several boundary components, all isomorphic to $\A_g^S(4,8)$. We will fix one of them, let us call it $V$. The hyperelliptic locus $\h_{g+1}^S(4,8)$ breaks into several irreducible components (cf. \cite{Tsu}); an irreducible component is identified by the choice of a fundamental system $\m$ of theta characteristic, we will fix such an $\m$ and denote by $Y=Y_{\m}$ the corresponding irreducible component. Since the cover is Galois, locally the intersection of $\h_{g+1}^S$ and $\A_g$ is isomorphic to the intersection of $Y$ and $V$. Because of this, it is enough to show that the scheme-theoretic intersection of $V^{ind}$ and $Y$ is reduced. We need to work at level $(4,8)$ to apply the following result, which is due set-theoretically to Mumford and scheme theoretically to Salvati Manni.
\begin{theorem}[\cite{SM}]\label{SM}
Fix a fundamental system of theta characteristic  $\m=(m_1, \dots , m_{2g+1})$, and let $b$ be the sum of  the odd $m_i$. Then, the corresponding irreducible component $Y=Y_{\m}$ of $\h_g(4,8)$ is scheme theoretically defined by the vanishing of the theta constants $\theta_{m+b}$ such that $m = m_{i_1} + \cdots + m_{i_k}$  for $k<g$ and $k \equiv g,g+1$, and the non-vanishing of the remaining theta constants.
\end{theorem}
In the statement, the non-vanishing of the remaining theta constants is needed to rule out the loci of decomposable abelian varieties; our statement is about Jacobians of smooth curves, so we are already working outside these loci. Let $\mathfrak{m}$ be a $g+1$ dimensional system of fundamental theta characteristic, e.g. the one defined in equation $7$ of \cite{SM}. Let $Y=Y_{\mathfrak{m}}$ be the corresponding irreducible component of $\h_{g+1}^S(4,8)$. Let $I_{g+1}(Y_{\mathfrak{m}})$ be the ideal of modular forms generated by the theta constants vanishing along $Y_{\mathfrak{m}}$. Because of Salvati Manni's result, this ideal defines scheme-theoretically $Y_{\mathfrak{m}}$. By direct computation one sees that
\[\Phi(\theta
\left[
\begin{array}{cc}
\epsilon & 0\\
\epsilon' & \delta
 \end{array}
\right])=
\theta
\left[
\begin{array}{c}
\epsilon \\
\epsilon'
 \end{array}
\right]\]
for $\delta$ equal either to 0 or 1. In the previous formula, $\Phi$ is the Siegel operator, i.e. the restriction operator from $\A^S_{g+1}(4,8)$ to one of the boundary component, say to $V$. Because of the theorem quoted above, this means that, scheme theoretically, the intersection of $Y_{\mathfrak{m}}$ and $V\cong \A_g(4,8)^S$ away from decomposable abelian varieties is isomorphic to $Y_{\mathfrak{n}}$; where $\mathfrak{n}$ is the $g$ dimensional system of fundamental theta characteristic defined in equation $7$ of \cite{SM}.
\end{proof}

We now complete the description of the tangent space of $\h_{g+1}^S$ along $\h_g$. Let $C$ be a smooth genus $g$ hyperelliptic curve and $X$ its Jacobian. To start with, let us describe the normal bundle exact sequence of $\A_g$ in $\A_{g+1}^S$ at $X$. This sequence reads
$$
 0\to T_X\A_g\to T_X\A_{g+1}^S\to H^0(X,2\Theta)^{\vee}\to 0  
$$
We will need the following explicit description of the action of these derivations. Let $F_{g+1}$ be a modular form on $\A_{g+1}$ and $F_g$ its restriction to $\A_g$. For any element $T$ in the Siegel upper half space $\H_{g+1}$ write
$$
T=
\left(
\begin{array}{cc}
\t & z \\ 
 ^t z & t
\end{array}
\right)\,,
$$
with $t$ in $\H_1$ and $\t$ in $\H_g$. Let $q:=\exp (2\pi i t)$; then the \emph{Fourier-Jacobi} expansion of $F_{g+1}$ is
\begin{equation}\label{FJ}
F_{g+1}(T)=F_g(\tau)+\sum_{n\geq 1}f_n(\tau,z)q^n
\end{equation}
where $f_n$ is a section of $H^0(X_{\tau},2n\Theta)$, $X_{\tau}$ is the principally polarised abelian variety defined by $\tau$, and $z$ is a system of co-ordinates on $X_{\tau}$. A derivation $D\in T_X \A_g$ acts as $D(F_{g+1})=D(F_g)$; a derivation in $D\in H^0(X,2\Theta)^{\vee}$ acts as $D(F_{g+1})=D(f_1)$.

The normal bundle exact sequence for $\h_g$ in $\h_{g+1}^S$ at $C$ is a subsequence of the normal bundle exact sequence of $\A_g$ in $\A_{g+1}^S$ at $X$. To describe it, we need to introduce the following morphism
\begin{equation}\label{Psi}
\begin{array}{cccccc}
\Psi:& C & \xrightarrow{f} & C\times C &\xrightarrow{\delta} & X\\ 
    & p & \mapsto & (p,\iota(p))&&\\
&&&(a,b)&\mapsto &AJ(a)-AJ(b)
\end{array}
\end{equation}
where $\iota$ is the hyperelliptic involution and $AJ$ is the Abel-Jacobi map. 
\begin{lemma}
The pull-back $\Psi^*2\Theta$ is isomorphic to $2(K_C+W)$, where $W$ is the divisor of Weierstrass points on $C$.
\end{lemma}
\begin{proof}
The pull-back $\delta^*2\Theta$ is $K_C\boxtimes K_C(2\Delta)$, where $\Delta$ is the diagonal; this is well-known, e.g. \cite[Equation 4.4]{Welters}. Now, we pull-back $K_C\boxtimes K_C(2\Delta)$ via $f$. The pull-back of $\Delta$ is $W$; the pull-back of $K_C\boxtimes K_C$ is $K_C+\iota^*K_C=2K_C$. 

\end{proof}

\begin{theorem} \label{tsHyp}
Keep notation as above, the normal bundle exact sequence for $\h_g$ in $\h_{g+1}$ at $C$ is
 \[0\rightarrow T_C\h_g \rightarrow T_C\h_{g+1}^S\rightarrow P_C \rightarrow 0\]
where $P_C$ is the image of the map
\[\Psi^*\colon H^0(X,2\Theta)\rightarrow H^0(C,2(K_C+W))\,.\]
In other words, the normal tangent cone at $C$ is the cone over $(\Psi(C),2\Theta)$.
\end{theorem}
\begin{proof}
There are two things we have to prove, first
\[T_C\h_{g+1}^S\cap T_C\A_g=T_C\h_g\,;\]
but this is equivalent to Theorem \ref{Transversality}.

To describe the co-kernel of the inclusion
$$T_C\h_g\hookrightarrow T_C\h_{g+1}^S$$
we need to know that, after blowing up $\A_g^S$ in $\A_{g+1}^S$, the proper transform of $\h_{g+1}^S$ meets the Kummer variety of $X$ in $\Psi(C)$. This is proved in \cite[Theorem 6]{Namik}, just remark that to obtain a generic irreducible nodal hyperelliptic curve we need to glue two points conjugated under the hyperelliptic involution.
\end{proof}
In \cite[Lemma 3.6]{PhD}, it is shown that $\Psi^*$ is not surjective and $P_C$ has rank $2g$; however, we do not need this result here.

\begin{remark}[Failure of Theorem \ref{Transversality} at the locus of decomposable abelian varieties]
For the sake of completeness, let us sketch a proof of the failure of Theorem \ref{Transversality} at the locus of decomposable abelian varieties. This result is not needed in this paper, but we think that the study of this intersection is interesting on its own. Pick two integers such that $g_1+g_2=g$; fix a hyperelliptic curve $C$ of genus $g_1$ and a hyperelliptic curve $D$ of genus $g_2$. Call $\iota$ the hyperelliptic involution. The point $(C,D)$ in $\h_{g+1}^S$ represents all the hyperelliptic curves of the form $C\sqcup D/(p\sim q,\iota(p)\sim \iota(q))$, where $p$ is a point varying in $C$ and $q$ is varying in $D$. Recall that we have an identification
$$\Sym ^2 (H^0(C,K_C)^{\vee}\oplus H^0(D,K_D)^{\vee}) =T_{(J(C)\times J(D))}\A_g $$
Arguing as in \cite{CSB}, we can show that the tangent space of $\h_{g+1}^S$ at $(C,D)$ contains the image of the map
\begin{displaymath}
\begin{array}{cccc}
 \psi\colon& C/\iota\times D/\iota& \to &\P \Sym ^2 (H^0(C,K_C)^{\vee}\oplus H^0(D,K_D)^{\vee}) \\
& (p,q)& \mapsto& \o_i(p)\psi_j(q)+\o_j(p)\psi_i(q) 
\end{array}
\end{displaymath}
where $\o_i$ are a basis of $H^0(C,K_C)$ and $\psi_i$ are a basis of $H^0(D,K_D)$. This is the same tangent direction we get when we consider an appropriate smoothing of the genus $g$ nodal curve $C\sqcup D/p\sim q$; this smoothing is not hyperelliptic, so the intersection of $T_{(C,D)}\h_{g+1}^S$ with  $T_{(J(C)\times J(D))}\A_g $ is strictly bigger than $T_{(C,D)}\h_g^S$.
\end{remark}
\end{section}

\begin{section}{Projective invariants of hyperelliptic curves}
In this section, we review the theory of projective invariants, and we use it to show that certain modular forms vanish on the hyperelliptic locus. 

To start with, let us introduce the auxiliary space $\B_g$. This is the moduli space of $2g+2$ points on $\P^1$, up to permutation and projectivity. The points are counted with multiplicity, points are not allowed to have multiplicity bigger that $g+1$. This space is classically constructed as a GIT quotient; it is irreducible, it has an open dense subset $\B_g^{\circ}$ where the $2g+2$ points are all distinct, and a boundary $D$ where at least two points coincide.

Let $C$ be a smooth genus $g$ hyperelliptic curve, fix a two to one map $\pi\colon C \to \P^1$. This morphism is unique up to projective transformations of $\P^1$, it ramifies at $2g+2$ points. A point $p$ is called a Weierstrass point if it is a ramification point for $\pi$.
\begin{definition}[Projective invariants]
The projective invariants of $C$ are the image of the Weierstrass points under $\pi$, considered up to permutations and projective automorphisms of $\P^1$.
\end{definition}
Equivalently, the projective invariants of $C$ are the points of the branch divisor of $\pi$, considered up to projectivity. The projective invariants of a smooth hyperelliptic curve $C$ are naturally a point of $\B_g^{\circ}$, so we have a morphism 
$$f_g\colon \h_g \to \B_g^{\circ}$$
One can reconstruct a hyperelliptic curve out of its projective invariants, and given $2g+2$ points on $\P^1$ there is a hyperelliptic curve with that projective invariants; this means that $f_g$ is an isomorphism. 

This construction has been extensively used to study the moduli space $\h_g$; references are \cite{IPI}, \cite{AL02} and \cite[Chapter 2]{Pas}.

As an aside, let us recall that the Thom\ae's formula permits to write the cross-ratios of the projective invariants in term of second order theta functions evaluated at the period matrix of $C$.
 
Following \cite{AL02}, the map $f_g$ extends to an isomorphism
$$f_g\colon \h_g \sqcup \eta_0^*\to  \B_g^{\circ} \sqcup D^*$$
where $\eta_0^*$ parametrises irreducible singular hyperelliptic curves with just one node, i.e. curves of the form $C/p\sim \iota(p)$, and $D^*$ parametrises $2g+2$ points on $\P^1$ such that exactly $2$ points coincide. The image of a curve in $\eta_0^*$ is a set of $2g+2$ points of the form $\{p_1,\dots , p_{2g},p,p\}$, where $\{p_1,\dots , p_{2g}\}$ are the projective invariants of the normalisation and the glued points are the pre-images of $p$ under $\pi$. 

Always following \cite{AL02}, we can extend $f_g$ further to a morphism
$$f_g\colon \overline{\h}_g\to \B_g$$
which contracts the boundary divisors of the Deligne-Mumford compactification different from the closure of $\eta_0^*$ to high co-dimension loci.  

\smallskip

We need some more information about $\B_g$, again references are \cite{IPI}, \cite{AL02} and \cite[Chapter 2]{Pas}. From the GIT point of view, the moduli space $\B_g$ can be constructed as the $\Proj$ of the ring $S(2,2g+2)$, which is the ring of co-invariant of binary forms of degree $2g+2$. This ring is formally constructed as follows: let $Z$ be the cartesian product of $2g+2$ copies of $\P^1$, on this variety we have a diagonal action of $SL(2,\C)$ and an action of the symmetric group; this action linearise to an action on the line bundle $M:=\mathcal{O}(1,\dots, 1)$, the ring $S(2,2g+2)$ is the ring of invariant element of $\RR(Z,M)$. More concretely, $S(2,2g+2)$ is the ring of symmetric functions in $2g+2$ variables, which are co-invariant under the natural action of $SL(2,\C)$. The discriminant $\D$ is an element of $S(2,2g+2)$ of degree $4g+2$, it cuts out the boundary divisor $D$. 

\smallskip

We now consider also the Jacobian morphism
$$ j\colon \overline{\h}_g\to \h_g^S \hookrightarrow  \A_g$$
Take the composition
$$j\circ f_g^{-1}\colon \B_g \dashrightarrow   \A_g $$ 
Under this map, $D^*$ dominates $\h_{g-1}$; explicitly, the set $\{p_1,\dots , p_{2g},p,p\}$ is mapped to the Jacobian of the smooth hyperelliptic curve defined by the projective invariants $\{p_1,\dots , p_{2g}\}$. Taking the pull-back we obtain a morphism of graded ring
\[\rho\colon\RR (\A_g,L_g)\rightarrow S(2,2g+2)\]
whose kernel is exactly the ideal of modular forms vanishing on the hyperelliptic locus. This map is sometime called Igusa morphism of projective invariants, it was introduced in \cite{IPI}, where Igusa proved that its degree is $\frac{1}{2}g$ \footnote{If $g$ is odd, all non-trivial modular forms have odd degree, so the factor $\frac{1}{2}$ should not worry the reader.}. Using this construction we can prove the following criterion.
\begin{criterion}[Weissauer - unpublished]\label{crit_slope}
Let $F_g$ be a weight $n$ and degree $g$ modular form. Restrict it to $\h_g^S$ and say it vanishes along $\h_{g-1}$ with multiplicity at least $k$. If
\[\frac{n}{k}< 8+\frac{4}{g}\]
then $F_g$ vanishes on $\h_g$.
\end{criterion}
\begin{proof} 
Suppose $F_g$ vanishes with multiplicity at least $k$ on $\h_{g-1}$. This means that 
$(j\circ f_g^{-1})^*F_g$ vanishes with multiplicity at least $k$ on $D$. In other words, $\D^k$ divides $\rho(F_g)$.
The degree of the discriminant in $S(2,2g+2)$ is $4g+2$, the degree of $\rho(F_g)$ is $\frac{1}{2}gn$.
Since, by hypothesis,
\[k(4g+2)>\frac{1}{2}gn\]
we obtain that $\rho(F_g)$ is equal to zero, so the claim.
\end{proof}
\begin{remark}[Relation with Theorem \ref{tsHyp}]
To show that $F_g$ vanishes along $\h_{g-1}$ with multiplicity at least $2$ one needs to know the tangent space of $\h_g^S$ along $\h_{g-1}$; in the applications, especially in Theorem \ref{second_equations}, we will use the description given in Theorem \ref{tsHyp}.
\end{remark}
\begin{remark}[Other versions of Criterion \ref{crit_slope}]
A weaker version of Criterion \ref{crit_slope} can be found in \cite{Poor}. An alternative proof is in \cite{Pas}. In \cite{SM2}, Salvati Manni attributed this criterion to Weissauer, in an unpublished preprint, and showed that the inequality is sharp. This Criterion is also related to the slope of the hyperelliptic locus, in the sense of slope of the cone of effective divisors (cf. \cite{CH}).
\end{remark}

\smallskip

Combining Theorem \ref{Transversality_Intro} and Criterion \ref{crit_slope} we can find a first group of stable equations for the hyperelliptic locus. We will need the following basic invariant of a quadratic form $(Q,\L)$
\[\mu(Q):=\min\{Q(v,v) \mid v\in \L ; v\neq 0\}=\min\{n \mid \RR_n(\L)\neq \varnothing \}\,,\]
where $\RR_n(\L)$ is the set of vectors of $\L$ of norm $2n$.
\begin{theorem}\label{eq_hyp}
Let $(Q,\L)$ and $(P,\G)$ be two even positive definite unimodular quadratic forms of rank $N$ and let $\mu:=\min \{\mu(Q),\mu(P)\}$. If
\[\frac{N}{\mu}\leq 8\,,\]
then 
\[F:=\Theta_{Q}-\Theta_{P}\]
is a stable equation for the hyperelliptic locus. In other words, $F_g$ vanishes on $\h_g$ for every $g$.
\end{theorem}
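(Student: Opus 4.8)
The plan is to argue by induction on $g$, applying the projective--invariants criterion \ref{crit_slope} at each stage. Write $n=\tfrac12 N$ for the common weight of $F$, and recall $\mu_{\L}=\mu_{\G}=\mu$. The target multiplicity will be $k=\mu/2$, for which the hypothesis gives exactly $\frac{n}{k}=\frac{N}{\mu}\leq 8<8+\frac{4}{g}$, so that \ref{crit_slope} forces $F_g$ to vanish on $\h_g$. Everything therefore reduces to showing that $F_g$ vanishes on $\A_{g-1}\cap\h_g^S$ with multiplicity at least $\mu/2$, and Theorem \ref{hyp_inter} is what lets me read this off from the Fourier--Jacobi expansion along the boundary.

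For the base of the induction I would check $F_1=0$. The series $\Theta_{\L,1}$ and $\Theta_{\G,1}$ are elliptic modular forms of weight $N/2$, and since $\mu_{\L}=\mu_{\G}=\mu$ both have expansion $1+O(q^{\mu/2})$; hence $F_1$ vanishes at the cusp to order at least $\mu/2$. The hypothesis $N/\mu\le 8$ is precisely what guarantees $\mu/2\geq \dim M_{N/2}$ (one checks this against $\dim M_{N/2}=\lfloor N/24\rfloor+1$), and a weight $N/2$ form vanishing at the cusp to that order is zero. In particular $\L$ and $\G$ share all one-variable representation numbers.

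For the inductive step, assume $F_{g-1}$ vanishes on $\h_{g-1}$ and consider $F_g=f_0+\sum_{m\ge 1}f_m q^m$ at a generic boundary point, where $f_0=F_{g-1}$ and, by (\ref{FJTheta}), $f_m$ is supported on vectors of norm $2m$. Thus $f_1=\cdots=f_{\mu/2-1}=0$, so $\sum_{m\ge\mu/2}f_m q^m$ vanishes to order $\mu/2$ along the boundary. The delicate term is $f_0=F_{g-1}$, which in general does not vanish identically on $\A_{g-1}$. Here I would use transversality (Theorem \ref{hyp_inter}) together with the tangent-space description (Theorem \ref{tsHyp}): a point of the discriminant corresponds to colliding two branch points, i.e. to smoothing a node obtained by gluing a conjugate pair $(p,\iota(p))$ on a genus $g-1$ hyperelliptic curve $C'$, and with $q$ the associated uniformiser the genus $g-1$ period block $\t(q)$ stays inside $\h_{g-1}$ to order $\mu/2$. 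The decisive first-order statement is that the node-smoothing deformation is the Schiffer variation $\o(p)\otimes\o(p)$, and this lies in $T\h_{g-1}$: by the lemma preceding Theorem \ref{tsHyp} the conormal space to $\h_{g-1}$ is the kernel of $S^2H^0(C',K)\to H^0(C',2K)$, and such a relation is the \emph{zero} quadratic differential, so its value at $p$ vanishes and $\o(p)\otimes\o(p)$ pairs to zero with every conormal vector. Consequently $f_0(\t(q))=F_{g-1}(\t(q))=O(q^{\mu/2})$ by the inductive hypothesis, so $F_g$ vanishes to order at least $\mu/2$ along $\A_{g-1}\cap\h_g^S$, and feeding $k=\mu/2$ into criterion \ref{crit_slope} completes the step.

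The main obstacle is exactly this control of $f_0$. Unlike the case of $\M_g$, where the non-transverse intersection of Theorem \ref{tCSB} inflates vanishing orders and makes stable equations impossible, here the transversality of Theorem \ref{hyp_inter} is what allows $F_{g-1}$ to be killed rather than to obstruct. I expect the genuinely technical part to be upgrading the first-order tangency above to the full statement that $\t(q)$ remains hyperelliptic to order $\mu/2$ along the degeneration; the Schiffer-variation/relation mechanism is the crucial input, and combined with Theorem \ref{hyp_inter} it should yield the required order and hence the theorem.
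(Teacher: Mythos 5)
Your skeleton coincides with the paper's: induction on $g$, criterion \ref{crit_slope} applied with multiplicity $k=\mu/2$ (so that $n/k=N/\mu\leq 8<8+\tfrac{4}{g}$), vanishing of the Fourier--Jacobi coefficients $f_1,\dots,f_{\mu/2-1}$ because $\RR_{2m}(\L)=\RR_{2m}(\G)=\varnothing$ for $2m<\mu$, and Theorem \ref{hyp_inter} as the decisive geometric input (your base case is correct but redundant: the criterion at $g=1$, where $8+\tfrac41=12$, is exactly your Sturm-type bound). The genuine gap is the one you flag yourself: controlling $f_0=F_{g-1}$ along the degeneration. Your Schiffer-variation computation is purely first-order; it shows $\dot\t(0)\in T\h_{g-1}$, hence $F_{g-1}(\t(q))=O(q^2)$, which handles $k\leq 2$, i.e.\ the pairs $(N,\mu)=(8,2),(16,2),(24,4),(32,4)$. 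But the theorem also covers $(48,6)$, where $k=3$ is indispensable: with $k=2$ the criterion would require $24/2=12<8+\tfrac{4}{g}$, false for every $g\geq 1$. For $k=3$ you need $F_{g-1}(\t(q))=O(q^3)$, i.e.\ that the arc $\t(q)$ osculates $\h_{g-1}$ to second order, and nothing in your argument produces this: the $q^2$-term of Fay's degeneration formula is not a Schiffer variation, and there is no a priori reason for second-order tangency. (A secondary loose end: you compute orders in $q$ along one arc, while the criterion needs the multiplicity of $\rho(F_g)$ along the divisor $D$; one must check that the arc is a generic transverse disc and match uniformisers.)

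The paper avoids jets of one-parameter families altogether, and this is exactly how it gets all $s<k$ uniformly. It works in the local ring of $\h_{g+1}^S$ at a generic point $\t$ of the boundary stratum, takes $I$ to be the ideal of $\h_{g+1}^S\cap\A_g$ --- which by Theorem \ref{hyp_inter} is, scheme-theoretically, the ideal of $\h_g$ --- and shows by induction on $s<k$ that the symbol of $F_{g+1}$ in $I^s/I^{s+1}$ vanishes, via the exact sequence
\[H^0(X,2s\Theta)\rightarrow I^s/I^{s+1}\xrightarrow{\;\Phi\;}\mathrm{Sym}^s(T_{\t}\h_g^{\vee})\,.\]
The image under $\Phi$ dies because $F_g\equiv 0$ on $\h_g$: the inductive hypothesis kills derivatives of \emph{every} order along $\h_g$ at once, so no higher-order tangency statement for any family is ever needed; and the transverse component, which lives in $H^0(X,2s\Theta)$, is controlled by the $s$-th Fourier--Jacobi coefficient, identically zero for $s<\mu/2$. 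Transversality is used at every order $s$, precisely to replace $T_{\t}(\A_g\cap\h_{g+1}^S)$ by $T_{\t}\h_g$, not only at first order as in your step. (Your first-order mechanism --- pairing $\o(p)\otimes\o(p)$ against conormal quadrics vanishing on the canonical curve --- is sound, and is essentially what the paper deploys later in Theorem \ref{24Hyp}, but for the present theorem it does not reach $\mu=6$.) To repair your proof, either establish the order-$\mu/2$ tangency of the plumbing family to $\h_{g-1}$, which is exactly the technical difficulty the symbol argument is designed to bypass, or recast your inductive step in these local-algebra terms.
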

\begin{proof}
The proof is by induction on $g$. The difference of two theta series vanishes on $\A_0$. Suppose the statement true for $g$, we want to apply Criterion \ref{crit_slope} to $F_{g+1}$. 
Call $k:=\frac{1}{2}\mu$, we need to prove that $F_{g+1}$ vanishes at the boundary component $\h_g$ with multiplicity at least $k$.

We first compute the multiplicity along tangent direction parallel to the boundary, namely along $T_C\h_{g+1}^S\cap T_C\A_g$, where $C$ is a smooth genus $g$ hyperelliptic curve. This intersection is, by Theorem \ref{Transversality_Intro}, equal to $T_C\h_g$. By induction, $F_g$ vanishes along $\h_g$, so $F_{g+1}$ is annihilated by the derivations contained in $T_C\h_{g+1}^S\cap T_C\A_g$.

Let us now look at the normal direction to $\A_g$; we will use the Fourier-Jacobi expansion introduced in the equation (\ref{FJ}). Writing out the Fourier-Jacobi expansion of $F_{g+1}$, the hypothesis on $\mu$ implies that the first $k$ terms vanish. This means that $F_{g+1}$ vanishes with order at least $k$ along the normal direction to $\A_g$ in $\A_{g+1}^S$; in particular, we obtain that it vanishes along $\h_g$ with multiplicity at least $k$ and we can apply Criterion \ref{crit_slope}.
\end{proof}

The hypotheses of Theorem \ref{eq_hyp} are quite restrictive; let us describe the cases where the Theorem can be applied.
\begin{proposition}
Let $(Q,\L)$ be an even, positive definite, unimodular quadratic form, then
\[\frac{\rk(Q)}{\mu(Q)}\leq 8\]
if and only if the pair $(\rk(Q),\mu(Q))$ is equal to one of the following pairs: $(8,2)$, $(16,2)$, $(32,4)$ or $(48,6)$.
\end{proposition}
\begin{proof}
Given any even unimodular quadratic form $(Q,\L)$, there is an upper bound
\[\mu(Q)\leq 2\lfloor\frac{\rk(Q)}{24}\rfloor+2\]
where ``$\lfloor \, \rfloor $'' is the round down (see \cite[Section 7.7 Corollary 21]{SpherePack} ). This bound, combined with the fact that the rank is divisible by $8$, gives the Proposition.

\end{proof}
 
Let us call the type of  quadratic form the pair $(\rk(Q),\mu(Q))$. There is just one quadratic form of type $(8,2)$ and one of type $(24,4)$, so we do not get any stable equation in these cases. The rank $16$ case was considered by Poor in \cite{Poor}: there are two quadratic forms of type $(16,2)$, so one gets one equation. In \cite[Corollary 5]{King}, using a generalization of the mass formula, it is shown that there exist at least ten millions of quadratic forms of rank $32$ and $\mu=4$ ; however, just $15$ of them are known explicitly. The situation for quadratic forms of type $(48,6)$ is not clear: believably, there exist many of them, see \cite[Page 15]{King}, but there is not any lower bound and just $3$ of them are known explicitly. (King adopts a slightly different notation: every quadratic form is tacitly assumed to be positive definite.) To summarise, Theorem \ref{eq_hyp} can be applied to the following cases

\begin{corollary}\label{first_equations}
If $P$ and $Q$ are two even, unimodular, positive definite quadratic forms meeting one of the following two hypotheses:
 \begin{enumerate}
\item $\rk(P)=\rk(Q)=32$ and $\mu(P)=\mu(Q)=4$; that is, the quadratic forms do not have any vector of norm $2$;
\item $\rk(P)=\rk(Q)=48$ and $\mu(P)=\mu(Q)=6$; that is, the quadratic forms do not have any vector of norm $2$ and $4$;
\end{enumerate}
then, the difference
$$\Theta_P-\Theta_Q$$
is a stable equation for the hyperelliptic locus.
\end{corollary}
\end{section}

\begin{section}{Niemeier quadratic forms}\label{24}
Niemeier quadratic forms are rank $24$ quadratic forms. In this section we prove the following:
\begin{theorem}\label{second_equations}
Let $(P,\Gamma)$ and $(Q,\Lambda)$ be two rank $24$ quadratic forms with the same number of vectors of norm $2$, then the difference
$$\Theta_P-\Theta_Q$$
is a stable equation for the hyperelliptic locus.
\end{theorem}
Vectors of norm $2$ are usually called roots. sThis result concerns the following $5$ pairs of quadratic forms
\vspace{0.2 cm}
\begin{center}
\begin{tabular}{c|c|c|c|c|c}
 quadratic forms & $A_5^4D_4$ , $D_4^6$ &$A_9^2D_6$ , $D_6^4$ & $A_{11}D_7E_6$, $E_6^4$& $A_{17}E_7$ , $D_{10} E_7^2$ & $E_8D_{16}$, $E_8^3$ \\ 
\hline
\# roots & 72 & 120 & 144 & 216 & 360
\end{tabular}
\end{center}
\vspace{0.2 cm}
\indent where a quadratic form of rank 24 is labelled by its root system (see e.g. \cite[Section 3]{LatCod} for more details). The pair $E_8^ 3,D_{16}E_8$ corresponds to the modular form $\Theta_{E_8}(\Theta_{E_8 \oplus E_8}-\Theta_{D_{16}^+})$, so its behaviour was well-understood. The others cases can not be expressed as a product of lower weight stable modular forms and they are not covered by previous results. 

This result is surprising because the slope of these quadratic forms, i.e. the ratio between the rank and the norm of the shortest vector, is strictly bigger than $8$, so they were not expected to vanish on the hyperelliptic locus in every genus. Before proving our theorem we need two preliminary results.

\begin{subsection}{A formula for sections of $2\Theta$} Let $s$ be a section of $2\Theta$ on the Jacobian of a curve $C$ with period matrix $\t$. For every couple of points $a$ and $b$ of $C$ the following classical formula holds:
\begin{equation}\label{cf}
s(\t,a-b)=E(a,b)^2[s(\t,0)\o(a,b)+\sum_{i,j}\frac{\partial^2 s}{\partial z_i \partial z_j}(\t,0)\o_i(a)\o_j(b)]
\end{equation}
where $E$ is the Prime form, $\{\o_i\}$ is the basis of the holomorphic differentials on $C$ corresponding to the basis $\{\frac{\partial}{\partial z_i}\}$ of the tangent space at the origin of the Jacobian, $\o(a,b)$ is the fundamental normalised bi-differential, and everything is trivialised with respect to a choice of local co-ordinates $z_a$ and $z_b$. This formula is well known, see e.g. \cite[Appendix A]{MVdeg}.


\end{subsection}
\begin{subsection}{The ``heat equation'' for Niemeier quadratic forms} The classification of rank $24$ quadratic forms is due to Niemeier, but it has been simplified by Venkov proving and using the following identity
\begin{theorem}[Venkov, cf. \cite{LatCod} Section 3]  \label{polLat}
Let $(\L,Q)$ be a rank $24$ quadratic form, then
$$r_2(\L)Q(v,w)=8\sum_{y\in \RR_2(\L)}Q(y,w)Q(y,v) \qquad \forall \, v,w \in \L$$
where $r_2(\L)$ is the number of roots and $\RR_2(\L)$ is the set of roots.
\end{theorem}
The proof relies upon the theory of degree 1 modular forms with harmonic coefficients. Let us draw a consequence of this result about the Fourier-Jacobi expansion of theta series (cf. equation (\ref{FJ})).

\begin{corollary} [Heat equation]\label{he}
The first Fourier-Jacobi coefficient $f_1$ of a theta series associated to a rank $24$ quadratic form $\L$ satisfies the following ``heat equation''
 \[r_2(\L)\pi i\frac{\partial f_1}{\partial \t_{ij}}(\t,0)=3(1+\delta_{ij})\frac{\partial^2 f_1}{\partial z_i \partial z_j}(\t,0)\,,\]
where $r_2(\L)$ is the number of roots of $\L$.
\end{corollary}
\begin{proof}
By explicit computation, we can write out the first Fourier-Jacobi coefficient of a theta series:
\[f_1(\t,z)=\sum_{x_1,\dots , x_g \in \L}\sum_{y\in \RR_{2}(\L)}\exp( \pi i \sum_{i,j}Q(x_i,x_j)\t_{ij}+2\pi i \sum_iQ(y,x_i)z_i)\,.\]
Fix two indexes $i$ and $j$, by explicit computation we have
\[\frac{\partial^2f_1}{\partial z_i \partial z_j}(\t,0)=(2\pi i)^2 \sum_{x_1,\dots,x_g\in \L}\sum_{y\in \RR_2(\L)}Q(y,x_i)Q(y,x_j)\exp(\pi i \sum_{i,j}Q(x_i,x_j)\t_{ij})\,,\]
On the other hand
\[(1+\delta_{ij})\frac{\partial f_1}{\partial \t_{ij}}(\t,0)=2\pi i\sum_{x_1,\dots,x_g\in \L}Q(x_i,x_j)\exp( \pi i\sum_{i,j}Q(x_i,x_j)\t_{ij})\,,\]
the coefficient $(1+\delta_{ij})$ is because the variables on $\H_g$ are $\t_{ij}$ with $i\leq j$, so when we compute the derivative with respect to $\t_{ij}$ we need to derive both $\t_{ij}$ and $\t_{ji}$. Applying Theorem \ref{polLat} we obtain the result.
\end{proof}
This formula is also discussed in \cite[page 16]{MVdeg}. This result is generalised to higher order Fourier-Jacobi coefficients and higher rank quadratic forms in \cite[Theorem 10.3]{PhD}. 
\end{subsection}

\begin{subsection}{Proof of Theorem \ref{second_equations}} We want to show that $F_g=\Theta_{P,g}-\Theta_{Q,g}$ is zero on $\h_g$ for every $g$; we argue by induction on $g$. The case $g=0$ is easy. To prove the inductive step we use Criterion \ref{crit_slope}: we need to show that $F_{g+1}$ vanishes along $\h_g$ with multiplicity at least $2$. As in Theorem \ref{eq_hyp}, the derivative along directions tangent to $\A_g$ vanishes because of Theorem \ref{Transversality_Intro} and the inductive hypothesis.

The normal direction is quite different: now the first Fourier-Jacobi coefficient is not trivial, so there is some work to do. Because of Theorem \ref{tsHyp}, it is enough to check that $f_1$ vanishes when restricted to points of the form $(\t,p-\iota(p))$, where $\t$ is the period matrix of a smooth hyperelliptic curve $C$, $p$ is a point of $C$ and $\iota$ is the hyperelliptic involution. 

To show this we argue as follows. First remark that
\[f_1(\t,0)=F_g(\t)=0\]
Then we apply the formula (\ref{cf}), trivialising everything with respect to co-ordinates $z_p$ and $\iota^*z_p$ and recalling that
\[\frac{\o}{dz_p}(p)=\frac{\o}{\iota^*dz_p}(\iota(p))\]
we get
\[f_1(\t,p-\iota(p))=E(p,\iota(p))^2\sum_{i,j}\frac{\partial^2 f_1}{\partial z_i \partial z_j}(\t,0)\o_i(p)\o_j(p)\]
Now the heat equation \ref{he} and the hypothesis on the number of roots come into the game: since $r_2(\L)=r_2(\G)=:r$, we have 
\[6\sum_{i,j}\frac{\partial^2 f_1}{\partial z_i \partial z_j}(\t,0)\o_i(p)\o_j(p)=r\pi i \sum_{i\geq j}\frac{\partial F_g}{\partial \t_{ij}}(\t)\o_i(p)\o_j(p)=(r\pi i) dF_g(\t)(p)\]
 Let us explain the last equality: the fibre of the cotangent bundle of $\A_g$ at $Jac(C)$ is isomorphic to $\Sym^2H^0(C,K_C)$, so $dF_g(\t)$ is a quadric in $\P H^0(C,K_C)^{\vee}$ and we can evaluate it on the image of $p$ under the canonical map. 

The co-normal bundle of $\h_g$ in $\M_g$ is given by the $-1$ eigenspace of $H^0(C,2K_C)$; the image of the co-differential $m\colon \Sym^2H^0(C,K_C)\to H^0(C,2K_C)$ is the $+1$ eigenspace; we conclude that the quadric in the co-normal bundle of $\h_g$ in $\A_g$ vanishes along the canonical image of $C$. Since $F_g$ vanishes along $\h_g$, $dF_g$ is a quadric containing the canonical image of $C$; in other words, it has to vanish when evaluated at any point $p$ of $C$. This concludes the proof of Theorem \ref{second_equations}.
\end{subsection}

\begin{subsection}{Other results about Niemeir quadratic forms}

With similar tools, we can prove other results about the behaviour of these modular forms on the moduli space of curves and abelian varieties.
\begin{theorem}[\cite{PhD} Corollary 11.2]
Let $P$ and $Q$ be two even positive definite unimodular quadratic forms of rank 24 with the same number of roots, then the stable modular form 
\[F:=\Theta_{P}-\Theta_{Q}\]
is zero on $\M_g$ for $g\leq 4$, and it cuts a divisor of slope 12 on $\M_5$.
\end{theorem}
\begin{theorem}[\cite{PhD} Theorem 11.3]\label{CuspForms}
The following degree 5 modular forms are non-trivial cusp forms
\begin{align*}\label{R5}
&\Theta(D_{16}E_8)-\Theta(E_8^3)-\frac{21504}{24}(\Theta(A_5^4D_5)-\Theta(D_4^6))\\
&\Theta(D_{16}E_8)-\Theta(E_8^3)-\frac{21504}{216}(\Theta(A_9^2D_6)-\Theta(D_6^4))\\
&\Theta(D_{16}E_8)-\Theta(E_8^3)-\frac{21504}{480}(\Theta(A_{11}D_7E_6)-\Theta(E_6^4))\\
&\Theta(D_{16}E_8)-\Theta(E_8^3)-\frac{21504}{-2520}(\Theta(A_{17}E_7)-\Theta(D_{10}E_7^2))
\end{align*}
where, for typographical reasons, we write $\Theta(Q)$ rather than $\Theta_{Q,5}$.
\end{theorem}
\end{subsection}

\end{section}

\bibliographystyle{alpha}



\begin{thebibliography}{BvdGHZ08}
\normalsize

\bibitem[ACG11]{ACG}
E.~Arbarello, M.~Cornalba, and P.~A. Griffiths.
\newblock {\em Geometry of algebraic curves. {V}olume {II}}, volume 268 of {\em
  Grundlehren der Mathematischen Wissenschaften}.
\newblock Springer, Heidelberg, 2011.
\newblock With a contribution by J. D. Harris.

\bibitem[AL02]{AL02}
D.~Avritzer and H.~Lange.
\newblock The moduli spaces of hyperelliptic curves and binary forms.
\newblock {\em Mathematische Zeitschrift}, 242(4):615--632, 2002.

\bibitem[BvdGHZ08]{1-2-3}
J.~H. Bruinier, G.~van~der Geer, G.~Harder, and D.~Zagier.
\newblock {\em The 1-2-3 of modular forms}.
\newblock Universitext. Springer-Verlag, Berlin, 2008.

\bibitem[CH88]{CH}
M.~Cornalba and J.~Harris.
\newblock Divisor classes associated to families of stable varieties, with
  applications to the moduli space of curves.
\newblock {\em Ann. Sci. \'Ecole Norm. Sup. (4)}, 21(3):455--475, 1988.

\bibitem[Cod14]{PhD}
G.~Codogni.
\newblock {\em Satake compactifications, Lattices and Schottky problem}.
\newblock PhD thesis, University of Cambridge, 2014.

\bibitem[CS99]{SpherePack}
J.~H. Conway and N.~J.~A. Sloane.
\newblock {\em Sphere packings, lattices and groups}, volume 290 of {\em
  Grundlehren der Mathematischen Wissenschaften}.
\newblock Springer-Verlag, New York, 1999.

\bibitem[CSB14]{CSB}
G.~Codogni and N.~I. Shepherd-Barron.
\newblock The non-existence of stable {S}chottky forms.
\newblock {\em Compos. Math.}, 150(4):679--690, 2014.

\bibitem[Ebe13]{LatCod}
W.~Ebeling.
\newblock {\em Lattices and codes}.
\newblock Advanced Lectures in Mathematics. Springer Spektrum, Wiesbaden, third
  edition, 2013.
\newblock A course partially based on lectures by Friedrich Hirzebruch.

\bibitem[Fre77]{stabile}
E.~Freitag.
\newblock Stabile {M}odulformen.
\newblock {\em Math. Ann.}, 230(3):197--211, 1977.

\bibitem[Fre83]{Fre}
E.~Freitag.
\newblock {\em Siegelsche {M}odulfunktionen}, volume 254 of {\em Grundlehren
  der Mathematischen Wissenschaften}.
\newblock Springer-Verlag, Berlin, 1983.

\bibitem[GHT14]{Hul}
S.~Grushevsky, K.~Hulek, and O.~Tommasi.
\newblock Stable cohomology of the perfect cone toroidal compactification of
  the moduli space of abelian varieties.
\newblock 2014.
\newblock To appear in {\em Journal f\"ur die reine und angewandte Mathematik.}

\bibitem[GKV10]{Volp2}
M.~R. Gaberdiel, C.~A. Keller, and R.~Volpato.
\newblock Genus two partition functions of chiral conformal field theories.
\newblock {\em Commun.Num.Theor.Phys}, (4):295--364, 2010.

\bibitem[GV09]{Volp}
M.~R. Gaberdiel and R.~Volpato.
\newblock Higher genus partition functions of meromorphic conformal field
  theories.
\newblock {\em J. High Energy Phys.}, (6):048, 43, 2009.

\bibitem[Hoy63]{Hoyt}
W.~L. Hoyt.
\newblock On products of algebraic families of jacobian varieties.
\newblock {\em Annals of Mathematics}, 77(3):415--423, 1963.

\bibitem[Igu67]{IPI}
J.~Igusa.
\newblock Modular forms and projective invariants.
\newblock {\em Amer. J. Math.}, 89:817--855, 1967.

\bibitem[Igu81]{IgusaSchot}
J.~Igusa.
\newblock Schottky's invariant and quadratic forms.
\newblock In {\em E. {B}. {C}hristoffel ({A}achen/{M}onschau, 1979)}, pages
  352--362. Birkh\"auser, Basel, 1981.

\bibitem[Kin03]{King}
O.~D. King.
\newblock A mass formula for unimodular lattices with no roots.
\newblock {\em Mathematics of Computation}, 72(242):839--863 (electronic),
  2003.

\bibitem[Kum02]{Kumar}
S.~Kumar.
\newblock {\em Kac-{M}oody groups, their flag varieties and representation
  theory}, volume 204 of {\em Progress in Mathematics}.
\newblock Birkh\"auser Boston Inc., 2002.

\bibitem[Mat15]{Matone}
M.~Matone.
\newblock A surprising relation for the effective coupling constants of N=2
  super yang-mills theories.
\newblock {\em J. High Energy Phys.}, 1506 (2015), 109.

\bibitem[Mum07]{Mum}
D.~Mumford.
\newblock {\em Tata lectures on theta. {I}}.
\newblock Modern Birkh\"auser Classics. Birkh\"auser Boston Inc., Boston, MA,
  2007.
\newblock With the collaboration of C. Musili, M. Nori, E. Previato and M.
  Stillman, Reprint of the 1983 edition.

\bibitem[MV10]{MVdeg}
M.~Matone and R.~Volpato.
\newblock Getting superstring amplitudes by degenerating {R}iemann surfaces.
\newblock {\em Nuclear Phys. B}, 839(1-2):21--51, 2010.

\bibitem[Nam73]{Namik}
Y.~Namikawa.
\newblock On the canonical holomorphic map from the moduli space of stable
  curves to the {I}gusa monoidal transform.
\newblock {\em Nagoya Math. J.}, 52:197--259, 1973.

\bibitem[Pas11]{Pas}
S.~Pascolutti.
\newblock {\em Boundaries of Moduli Space of Hyperelliptic Curves with a Level
  2 Structure}.
\newblock PhD thesis, La Sapienza, Universit\`{a} di Roma, 2011.

\bibitem[Poo96]{Poor}
C.~Poor.
\newblock Schottky's form and the hyperelliptic locus.
\newblock {\em Proc. Amer. Math. Soc.}, 124(7):1987--1991, 1996.

\bibitem[Sat56]{Satake}
I.~Satake.
\newblock On the compactification of the {S}iegel space.
\newblock {\em J. Indian Math. Soc. (N.S.)}, 20:259--281, 1956.

\bibitem[SB13]{SB}
N.~Shepherd-Barron.
\newblock Siegel modular forms and the gonality of curves.
\newblock {\em arXiv:1306.6253}, 2013.


\bibitem[SM89]{StableSalvatiManni}
R.~Salvati~Manni.
\newblock Thetanullwerte and stable modular forms.
\newblock {\em Amer. J. Math.}, 111 (3) 435--455, 1989

\bibitem[SM00]{SM2}
R.~Salvati~Manni.
\newblock Slope of cusp forms and theta series.
\newblock {\em J. Number Theory}, 83 (2) 282--296, 2000


\bibitem[SM03]{SM}
R.~Salvati~Manni.
\newblock Modular forms vanishing on the hyperelliptic locus.
\newblock {\em Japan. J. Math. (N.S.)}, 29 (1) 135--142, 2003


\bibitem[Tsu91]{Tsu}
S.~Tsuyumine.
\newblock Thetanullwerte on a moduli space of curves and hyperelliptic loci.
\newblock {\em Mathematische Zeltschirift}, 207 (4) :539--568, 1991.

\bibitem[Wel86]{Welters}
G.~E. Welters.
\newblock The surface {$C-C$} on {J}acobi varieties and 2nd order theta
  functions.
\newblock {\em Acta Math.}, 157(1-2):1--22, 1986.


\end{thebibliography}
\end{document}